\newcommand{\prob}{\mathbb{P}}
\newcommand{\E}{\mathbb{E}}
\newcommand{\ind}{\mathbbm{1}}
\newtheorem{theorem}{Theorem}[section]
\newtheorem{example}[theorem]{Example}
\newtheorem{remark}[theorem]{Remark}
\titleformat{\section}{\normalfont\large\bfseries}{\thesection}{1em}{\MakeUppercase}
\titleformat{\subsection}{\normalfont\normalsize\bfseries}{\thesubsection}{1em}{}
\titleformat{\subsubsection}[runin]{\normalfont\normalsize\bfseries}{\thesubsubsection}{1em}{}[:]
\setlist[enumerate,1]{label=\arabic*.}
\setlist[enumerate,2]{label=(\alph*)}
\setlist[itemize,1]{label=$\bullet$}
\title{\textbf{\Large Finite Time Explosion of Stochastic Differential Equations: A survey into Khasminskii's Lyapunov Method and its Consistency with the Osgood Criterion}}
\author{\textsc{Seungsoo Lee}}
\date{\today}
\begin{document}

\maketitle

\begin{abstract}
\noindent Solutions of Stochastic Differential Equations can have three types of explosive behaviors: almost-sure non-explosive, explosion with positive probability, and almost sure explosion. In this paper, we will provide a survey of Khasminskii's Lyapunov method for classifying explosive behaviors of solutions of stochastic differential equations. We will embark our expedition by examining the renowned Feller's test for explosion and observing its shortfalls. Afterwards, we will present Khasminskii's Lyapunov method for almost-sure non-explosion, explosion with positive probability, and almost-sure explosion. Ample examples will be provided to illuminate the power of Khasminskii's Lyapunov methods. Furthermore, quick layovers will be made to extend Khasminskii's Lyapunov method for almost-sure non-explosion and explosion with positive probability for jump processes with constant Poisson intensities. 
    
\end{abstract}

\section{Background}

In this paper, we will go over the established methods to analyze explosive behaviors for stochastic differential equations and their advantages and disadvantages. Namely, we will look at the renowned Feller's test for explosion, and the lesser known Lyapunov function method. First show the explosive behavior for the deterministic ordinary differential equations (ODE) to provide a motivation for studying the explosive behavior of stochastic differential equations (SDE). 

It has already been shown in \cite{osgood1898beweis}
that finite time explosion in ODE has already been fully characterized by the Osgood condition which states that for an ODE of the form $\frac{dx}{dt}=b(x(t))>0$ with $x(0)=\xi>0$ and $t>0$ explodes if and only if 
$$\int_\xi^\infty \frac{1}{b(s)}ds<\infty$$
However, when our differential equation is perturbed with noise, our characterization becomes more complicated as the solution path can explode with positive probability, probability 1, and not explode with probability 1. It has already been shown in \cite{leon2011osgood} that when we have noise with constant intensity, that the Osgood condition of the drift parameter dictates the almost sure explosion and almost sure non-explosion of solution paths. Yet, we can see that the Osgood condition is no longer applicable when we have noise with high powers as shown in Example 1.2, where the drift indicates explosion, but the addition of large noise prevents explosion. Thus, further analysis is required to characterize the explosive behavior of SDEs. 

Throughout the paper, we shall define a solution of our SDE to be explosive when the hitting time for our solution, $X_t$, is finite, $\tau_\infty<\infty$, and non-explosive if $\tau_\infty=\infty$. Thus, there are three possible cases involving explosions: positive probability of explosion $\prob(\tau_\infty<\infty)>0$, almost sure non-explosion $\prob(\tau_\infty=\infty)=1$, and almost sure explosion $\prob(\tau_\infty<\infty)=1$. Of the three methods for testing explosion, only the Lyapunov function method will fully explore all three types of explosions.  

One prominent test for finite time explosion is Feller's test for explosion as shown by Karatzas and Shreve \cite{karatzas2014brownian}
\begin{theorem} Let our SDE be of the form
$$dX_t=b(X_t)dt+\sigma(X_t)dW_t\:(pg.329).$$
Assume we have $\sigma^2(x)>0\:\forall x\in\mathbb{R}$ (non-degeneracy), and $\forall x\in\mathbb{R}\exists \epsilon>0$ st. $\int_{x-\epsilon}^{x+\epsilon}\frac{\lvert b(y)\rvert dy}{\sigma^2(y)}<\infty$ (local integrability). Now, let the scale function $p(x)$ be defined as:
$$p(x):= \int_c^x\exp{\left(-2\int_c^\xi\frac{b(\zeta)d\zeta}{\sigma^2(\zeta)}\right)}d\xi \:\:\:(pg.339).$$
Now, we define $v(x):=\int^x_cp'(y)\int_c^y\frac{2dz}{p'(z)\sigma^2(z)}dy\:\:(pg.347).$ Now let $I:=(l,r)$ denote the interval of interest and $\tau$ be the hitting time for our end points $l$ or $r$. Then, Feller's Test states that when $v(l^+)=v(r^-)=\infty$, $\prob(\tau=\infty)=1$ and when $v(l^+)=v(r^-)=\infty$ is not true, $\prob(\tau=\infty)<1$. 
\end{theorem}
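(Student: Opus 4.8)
The plan is to derive both implications from two elementary facts about the auxiliary functions and then push everything through It\^o's formula plus optional stopping. Write $(\mathcal{L}f)(x):=\tfrac12\sigma^2(x)f''(x)+b(x)f'(x)$ for the diffusion generator. First I would record that $\mathcal{L}p=0$ — immediate, since $(\log p')'=-2b/\sigma^2$ by definition of $p$ — and that $v$ solves $\mathcal{L}v=1$: setting $g(x):=\int_c^x\tfrac{2\,dz}{p'(z)\sigma^2(z)}$ one has $v'=p'g$, $v''=p''g+p'g'$, so
\[
\mathcal{L}v=g\bigl(\tfrac12\sigma^2p''+bp'\bigr)+\tfrac12\sigma^2p'g'=g\,\mathcal{L}p+\tfrac12\sigma^2p'\cdot\tfrac{2}{p'\sigma^2}=1.
\]
Alongside this I would note that $v\ge0$ with $v(c)=0$, that $v$ is nonincreasing on $(l,c]$ and nondecreasing on $[c,r)$ (since $v'=p'g$ and $g$ has the sign of $x-c$), so that $\sup_{[a,b]}v=\max(v(a),v(b))$ for $l<a<b<r$, and that $v(r^-)<\infty$ forces $p(r^-)<\infty$ (because $v(r^-)\ge g(y_0)\,(p(r^-)-p(y_0))$ for any fixed $y_0\in(c,r)$). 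Finally, fixing $x\in(l,r)$, choosing $a_n\downarrow l$, $b_n\uparrow r$ with $a_n<x<b_n$, and letting $S_n$ be the exit time of $X$ from $(a_n,b_n)$ (so $S_n\uparrow\tau$), I would apply It\^o's formula to $v(X_{t\wedge S_n})$ — the martingale part being a genuine martingale once stopped at $S_n$, its integrand being bounded on $[a_n,b_n]$ — to obtain the master identity $\E_x[v(X_{t\wedge S_n})]=v(x)+\E_x[t\wedge S_n]$.

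For the non-explosion direction I would assume $v(l^+)=v(r^-)=\infty$. Since $v\ge0$ and $X_{S_n}\in\{a_n,b_n\}$ on $\{S_n\le t\}$, the master identity gives $v(x)+t\ge\E_x[v(X_{t\wedge S_n})]\ge\min(v(a_n),v(b_n))\,\prob_x(S_n\le t)$, hence $\prob_x(S_n\le t)\le(v(x)+t)/\min(v(a_n),v(b_n))\to0$ as $n\to\infty$. As $\{S_n\le t\}\downarrow\{\tau\le t\}$, this forces $\prob_x(\tau\le t)=0$ for every $t$, i.e. $\prob_x(\tau=\infty)=1$.

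For the converse I would assume the hypothesis fails and, reflecting the interval if necessary, take $v(r^-)<\infty$, so also $p(r^-)<\infty$. Fix $a\in(l,x)$ and, for $b\in(x,r)$ close enough to $r$ that $v(b)\le v(r^-)$, let $\tau_{a,b}:=\tau_a\wedge\tau_b$. The master identity with $\tau_{a,b}$ in place of $S_n$, together with $v\le\max(v(a),v(r^-))$ on $[a,b]$, gives $\E_x[t\wedge\tau_{a,b}]\le\max(v(a),v(r^-))$ uniformly in $t$ and $b$; letting $t\to\infty$ and then $b\uparrow r$ (monotone convergence, $\tau_{a,b}\uparrow\tau_a\wedge\tau_r$ with $\tau_r:=\lim_{b\uparrow r}\tau_b$) yields $\E_x[\tau_a\wedge\tau_r]<\infty$, so $\tau_a\wedge\tau_r<\infty$ almost surely. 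Since $p(X)$ stopped at $\tau_{a,b}$ is a bounded martingale and $\tau_{a,b}<\infty$ a.s. by the previous step, optional stopping gives $\prob_x(\tau_b<\tau_a)=\tfrac{p(x)-p(a)}{p(b)-p(a)}$, which increases to $\tfrac{p(x)-p(a)}{p(r^-)-p(a)}>0$ as $b\uparrow r$. On the event $\bigcap_{x<b<r}\{\tau_b<\tau_a\}$, which therefore has positive probability, the path attains every level below $r$ before it reaches $a$, so $\tau_r\le\tau_a$; combined with $\tau_a\wedge\tau_r<\infty$ and continuity of paths (which rules out $\tau_r=\tau_a<\infty$) this forces $\tau_r<\infty$. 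Thus with positive probability the solution climbs to $r$ in finite time, i.e. $\tau<\infty$, so $\prob_x(\tau=\infty)<1$, as desired.

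The conceptual skeleton is short, so the hard part lies in the weakness of the hypotheses: with $b/\sigma^2$ only locally integrable, $p'$, $g$, and hence $v'$ are merely absolutely continuous, not $C^1$, so the It\^o formula underlying the master identity has to be the generalized (It\^o--Tanaka / Meyer) version for functions with absolutely continuous derivative, and checking that it applies — together with checking the solution is well defined up to $\tau$ — is the main technical obstacle. A secondary subtlety is the meaning of ``$\tau_r<\infty$'': the endpoint $r$ is reached only as a limit of level-crossing times, and one must confirm (via the finite-expectation bound and path continuity, as above) that this coincides with the explosion time $\tau$ rather than with oscillatory non-explosion. The case $v(l^+)<\infty$ is the mirror image of the argument just given.
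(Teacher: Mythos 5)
The paper does not prove this theorem at all --- it is stated as a quotation of Karatzas and Shreve and immediately applied in the examples --- so there is no in-paper argument to compare yours against. Your proposal is correct and is essentially the standard proof from that reference: the identities $\mathcal{L}p=0$ and $\mathcal{L}v=1$ check out, the monotonicity of $v$ away from $c$ and the implication $v(r^-)<\infty\Rightarrow p(r^-)<\infty$ are right, the master identity plus the bound $\prob_x(S_n\le t)\le (v(x)+t)/\min(v(a_n),v(b_n))$ cleanly gives the non-explosion direction, and the converse via $\E_x[\tau_{a,b}]\le\max(v(a),v(r^-))$ together with the scale-function exit probability $\frac{p(x)-p(a)}{p(b)-p(a)}$ is the standard route to positive probability of reaching $r$ in finite time. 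You also correctly flag the one genuine technical obstacle, namely that $p'$ and $v'$ are only absolutely continuous under the local-integrability hypothesis, so the It\^o step needs the generalized (It\^o--Tanaka/Meyer) formula; that is precisely the point where the full argument in Karatzas and Shreve does its technical work. No gaps beyond the ones you have already identified and correctly deferred.
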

Now, let us show two examples of applying Feller's test to show explosion of a process and non-explosion of a process. We will start with an explosive example with constant intensity noise. 
\begin{example}Explosiveness of Constant Intensity Noise. Consider the following SDE 
$$dX_t=X_t^2dt+\sigma dW_t$$
where $\sigma\in\mathbb{R}$ is a constant. It has been shown in Leon-Villa's Theorem 3.1, that such process explodes in finite time if and only if the Osgood criterion ($\int_a^\infty \frac{1}{b(t)}dt<\infty$) is satisfied \cite{leon2011osgood}. Since $\int_a^\infty\frac{1}{t^2}dt<\infty$, we expect Feller's test to state $\prob(\tau=\infty)<1$ or at least one of $v(\infty),\:v(-\infty)$ to be bounded. Let $\l^+=-\infty$ and $r^-=\infty$, so we have $I=(-\infty,\infty)=\mathbb{R}$. Note our $p(x)=\int_c^x\exp{(-2\int_c^\xi\frac{\zeta^2d\zeta}{\sigma^2})}d\xi$ and so we know our $v(x)$ is of the form
$$v(x)=\int^x_ce^{-\frac{2}{3\sigma^2}(y^3-c^3)}\left(\frac{2}{\sigma^2}\int_c^ye^{\frac{2}{3\sigma^2}(\zeta^3-c^3)}d\zeta \right)dy$$
$$=\int^x_ce^{-\frac{2}{3\sigma^2}(y^3)}\left(\frac{2}{\sigma^2}\int_c^ye^{\frac{2}{3\sigma^2}(\zeta^3)}d\zeta \right)dy$$
Now, when we have $v(\infty)$, we have
\begin{align*}
    v(\infty)&=\int^\infty_ce^{-\frac{2}{3\sigma^2}(y^3)}\left(\frac{2}{\sigma^2}\int_c^ye^{\frac{2}{3\sigma^2}(\zeta^3)}d\zeta \right)dy\\
\end{align*}
Let us first address the integral inside the integral. Note that we can establish the following inequality for sufficiently large $y$
$$\int_c^ye^{\zeta^3}d\zeta=\int_c^{y-1}e^{\zeta^3}d\zeta+\int_{y-1}^y\frac{3\zeta^2}{3\zeta^2}e^{\zeta^3}d\zeta$$
Now, we apply integration by parts by letting $u=\frac{1}{\zeta^2}$ and $dv=3\zeta^2 e^{\zeta^3}d\zeta$. Thus, we have
\begin{align*}
    \int_c^y\frac{3\zeta^2 e^{\zeta^3}}{3\zeta^2}d\zeta&=\frac{e^{y^3}}{y^2}-\frac{e^{c^3}}{c^2}+\int_c^y\frac{2e^{\zeta^3}}{\zeta^3}d\zeta\\
    &\leq k\frac{e^{y^3}}{y^2}
\end{align*}
Thus we can state by reparameterizing $\tilde{y}^3=\frac{2}{3\sigma^2}y^3$
\begin{align} \label{eq:v-infinity}
    v(\infty)&=\int^\infty_ce^{-\frac{2}{3\sigma^2}(y^3)}\left(\frac{2}{\sigma^2}\int_c^ye^{\frac{2}{3\sigma^2}(\zeta^3)}d\zeta \right)dy \nonumber\\
    &\leq \int^\infty_ce^{-\tilde{y}^3}k' \frac{e^{\tilde{y}^3}}{\tilde{y}^2}d\tilde{y}\nonumber \\
    &=\int_c^\infty \frac{k}{\tilde{y}^2}d\tilde{y}\:\:\:\:<\infty
\end{align}


Thus, since $v(\infty)<\infty$ we know from Feller's test that $\prob(\tau_\infty<\infty)>0$ as desired. 
\end{example}

\begin{example} \label{ex:non-explo} Non-explosiveness of High Power Noise. Consider the following SDE
$$dX_t=X_t^2dt+X_t^5dW_t.$$
Then we know $p(x)=\int_c^x \exp{\left(-2\int_c^\xi \frac{\zeta^2}{\zeta^{10}}\right)d\zeta}d\xi$. Then we know
$$v(x)=k\int_c^xe^{k_2y^{-7}}\int_c^y\zeta^{-10}e^{-k_2\zeta^{-7}}d\zeta dy$$
We want to establish a lower estimate for $v(x)$ and show that they go to infinity as $x\rightarrow\pm\infty$. Let $c>1$. Then we know 
$$\int_c^y\zeta^{-10}e^{-k_2\zeta^{-7}}d\zeta\geq\int_c^y\zeta^{-15}e^{-k_2\zeta^{-7}}d\zeta$$
Now we apply u-substitution for the integral on the right with $u=x^{-7}$ to attain 
$$\int_c^y\zeta^{-15}e^{-k_2\zeta^{-7}}d\zeta=-\frac{1}{7k_2^2} \int e^uudu$$
Now we apply integration by parts and substitute to find
$$\int_c^y\zeta^{-15}e^{-k_2\zeta^{-7}}d\zeta=-\frac{-k_2c^7e^{-\frac{k_2}{y^7}}-c^7y^7e^{-\frac{k_2}{y^7}}+k_2y^7e^{-\frac{k_2}{c^7}}+c^7y^7e^{-\frac{k_2}{c^7}}}{7k_2^2c^7y^7}$$
$$=\frac{(c^7y^7+k_2c^7)e^{-k_2y^{-7}}}{7k_2^2c^7y^7}+k_3$$
Thus, we know
\begin{align*}
    v(x)&\geq k\int_c^xe^{k_2y^{-7}}\int_c^y\zeta^{-15}e^{-k_2\zeta^{-7}}d\zeta dy\\
    &=k\int_c^xe^{k_2y^{-7}} \left(\frac{(c^7y^7+k_2c^7)e^{-k_2y^{-7}}}{7k_2^2c^7y^7}+k_3\right)dy
\end{align*}
Thus as $x\rightarrow\pm\infty$ we can see that we are integrating an approximate constant over infinite length and thus $v(\infty)=v(-\infty)=\infty$ as desired. 
\end{example}


\begin{remark}One can see that by the Osgood criterion, the ODE $dX=X^2dt$ explodes as $\int\frac{1}{X^2}dX<\infty$. When a small amount of noise is added through $dX_t=X_t^2+\sigma dW_t$, we observe explosion. However, when the noise is sufficiently large enough, like $dX_t=X_t^2+X_t^5 dW_t$, we see that our SDE does not explode. In fact, as $X_t>>0$, we see that the diffusion component becomes more significant than the force component. However, for dimension less than 2, the noise component cannot cause explosion as a drunk man will always find his way home while a drunk bird may not.  
\end{remark}

\begin{remark} One benefit of Feller's test is that one can apply it in a systematic manner. However, one downside of Feller's test is that it is difficult to extend to higher dimensions. Note that local integrability, which is used to construct $p(x)$, provides the following constraint $\frac{|b(x)|}{\sigma^2(x)}$. Thus, in higher dimensions, where $b(x)$ is a vector and $\sigma(x)$ is a matrix, it is not obvious how one can extend this test. Furthermore, Feller's test does not explore all three cases of explosion as it provides no information about almost sure explosion. 
\end{remark}

Another method to test for positive probability of explosion is through the use of Lyapunov function as described by Chow and Khasminskii \cite{chow2014almost}. Unlike Feller's test for explosion, which was methodical but involved constructing multiple integral functions that could be difficult to compute and was limited to 1-Dimensional processes, the Lyapunov method involves finding a function $V$ that satisfies certain criterion involving its end behavior and generator, which is computed through its partial derivatives, which are often easier to evaluate than integral functions. 

\section{Almost Sure Non-explosion of SDE through Lyapunov Functions}
In the following proofs for Khasminskii's theorems using Lyapunov functions, we use the concept of generators to establish a connection between the expectation and the initial condition as used in Dynkin's formula \cite{oksendal2013stochastic}. Thus, in this section we establish how to find the generator of an SDE with and without jumps for the sake of completeness.
For a stochastic process $X_t\in\mathbb{R}^d$, we define its generator $L$ acting on $f$ as
$$Lf=\lim_{h\downarrow 0}\frac{1}{h}\E(f(X_{t+h})-f(X_t)|X_t)\cite{oksendal2019stochastic}$$
Now, for the first case without jumps, let $X_t$ be a continuous process that is a solution of $dX_t=b(t,X_t)dt+\sigma(t,X_t)dW_t$ and $f\in\mathcal{C}^{1,2}$. From Ito's lemma, we know 
$$df(X_t)=\left(b(t,X_t)\frac{\partial f}{\partial X_t}+\frac{1}{2}\sigma(t,X_t)^2\frac{\partial^2 f}{\partial X_t^2}\right)dt+\sigma(t,X_t)\frac{\partial f}{\partial X_t}dW_t$$
Thus, it follows that
$$f(X_t)-f(X_0)=\int_0^tb(u,X_u)\frac{\partial f}{\partial X_u}+\frac{1}{2}\sigma(u,X_u)^2\frac{\partial^2 f}{\partial X_u^2}du+\int_0^t\sigma(u,X_u)\frac{\partial f}{\partial X_u}dW_u$$
Now, by taking the expectation and recalling that Ito integrals are martingales, we observe 
\begin{align*}
    \lim_{t\downarrow0}\frac{1}{t}\E(f(X_t)-f(X_0))&=\lim_{t\downarrow0}\frac{1}{t}\E\left(\int_0^tb(u,X_u)\frac{\partial f}{\partial X_u}+\frac{1}{2}\sigma(u,X_u)^2\frac{\partial^2 f}{\partial X_u^2}du\right)\\
    &=\lim_{t\downarrow0}\frac{1}{t}\int_0^t\E\left(b(u,X_u)\frac{\partial f}{\partial X_u}+\frac{1}{2}\sigma(u,X_u)^2\frac{\partial^2 f}{\partial X_u^2}\right)du\\
    &=b(t,X_t)f'(X_t)+\frac{1}{2}\sigma(t,X_t)^2f''(X_t)
\end{align*}
Now, for the case with jumps, let $X_t$ be a solution of the SDE of the form $dX_t=b(t,X_t)dt+\sigma(t,X_t)dW_t+J(t,X_t)dN_t$. Note from Ito's lemma, we know
$$df(X_t)=\left(b(t,X_t)\frac{\partial f}{\partial X_t}+\frac{1}{2}\sigma(t,X_t)^2\frac{\partial^2 f}{\partial X_t^2}\right)dt+\sigma(t,X_t)\frac{\partial f}{\partial X_t}dW_t+(f(X_{t^-}+Y_t)-f(X_{t^-}))dN_t$$
Thus, it follows from the integral form that 
$$f(X_t)-f(X_0)=\int_0^t\left(b(t,X_t)\frac{\partial f}{\partial X_u}+\frac{1}{2}\sigma(u,X_u)^2\frac{\partial^2 f}{\partial X_u^2}\right)du+\int_0^t\sigma(u,X_u)\frac{\partial f}{\partial X_u}dW_u$$
$$+\int_0^t(f(X_{s^-}+Y_s)-f(X_{s^-}))dN_s$$
Now, note that by taking the expectation, we have
\begin{align*}
    \lim_{t\downarrow0}\frac{1}{t}\E(f(X_t)-f(X_0))&=b(t,X_t)f'(X_t)+\frac{1}{2}\sigma(t,X_t)^2f''(X_t)+\E(\int_0^t(f(X_{s^-}+Y_s)-f(X_{s^-}))dN_s)\\
    &=b(t,X_t)f'(X_t)+\frac{1}{2}\sigma(t,X_t)^2f''(X_t)+\E(\sum^{N_t}(f(X_{s^-}+Y_s)-f(X_{s^-})))\\
    &=b(t,X_t)f'(X_t)+\frac{1}{2}\sigma(t,X_t)^2f''(X_t)+\E^{N_t}(\E(\sum^{N_t}(f(X_{s^-}+Y_s)-f(X_{s^-}))|N_t))\\
    &=b(t,X_t)f'(X_t)+\frac{1}{2}\sigma(t,X_t)^2f''(X_t)+\E^{N_t}(N_t\E(f(X_{s^-}+Y_s)-f(X_{s^-})|N_t))\\
    &=b(t,X_t)f'(X_t)+\frac{1}{2}\sigma(t,X_t)^2f''(X_t)+\E^{N_t}(N_t\E(f(X_{s^-}+Y_s)-f(X_{s^-})))\\
    &=b(t,X_t)f'(X_t)+\frac{1}{2}\sigma(t,X_t)^2f''(X_t)+\lambda\E^{Y_t}(f(X_{s^-}+Y_s)-f(X_{s^-})))
\end{align*}
Thus, when $f$ is a polynomial, the jump component of the generator becomes the moments for $Y$, with respect to the jump size distribution. 

\begin{remark}
Note that when we have an $f$ depending explicitly on time, $f(t,X_t)$, we have a $\frac{\partial f}{\partial t}$ term in our force from our Ito formula. In the proofs for the stability theorems, we will choose our Lyapunov functions to be of the form $f(X_t)=V(X_t)$, but we will multiply it by $e^{-C(t-t_0)}$ so we will end up with a $f(t,X_t)=V(X_t)e^{-C(t-t_0)}$ and utilize the $\frac{\partial}{\partial t}$ present in the Ito formula. 
\end{remark}

Now, we will show how Khasminskii's Lyapunov function can be used to show almost sure non-explosion $\prob(\tau_\infty=\infty)=1$ 

\begin{theorem} Khasminskii's theorem 3.5 for almost sure non-explosion in \cite{khasminskii2012stochastic}. Let $\mathcal{D}\subset\mathbb{R}^d$ be a bounded open set with regular boundary $\partial \mathcal{D}$ and let $\mathcal{D}^c$ denote the complement of $\mathcal{D}$. Suppose our SDE parameters satisfy the Lipschitz condition on any compact subset of $\mathbb{R}^d$ for any $t\geq t_0$. Now, assume there exist a positive function $V(x)$ belonging to the class $\mathcal{C}^{2}$ and positive constants $C$ such that the following conditions hold 
$$LV\leq CV$$
$$V=\inf_{|x|>R}V(x)\rightarrow\infty\:\text{as }R\rightarrow\infty$$
Then it follows that $\prob(\tau_\infty<\infty)=0$. 
\end{theorem}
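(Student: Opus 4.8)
The plan is to follow Khasminskii's standard strategy: turn the differential inequality $LV \le CV$ into a genuine supermartingale by discounting in time, and then use that supermartingale to bound the probability that the process leaves a large ball before a fixed time. Motivated by the remark preceding the theorem, I would introduce the auxiliary function $f(t,x) = e^{-C(t-t_0)}V(x)$, which lies in $\mathcal{C}^{1,2}$. Its space-time generator is
$$\mathcal{L}f(t,x) = \frac{\partial f}{\partial t} + e^{-C(t-t_0)}LV(x) = e^{-C(t-t_0)}\bigl(LV(x) - CV(x)\bigr) \le 0,$$
the last inequality being exactly the hypothesis $LV \le CV$ together with positivity of the discount factor. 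By the integral form of Itô's formula established above, $f(t\wedge\tau, X_{t\wedge\tau})$ is then a nonnegative local supermartingale along the process stopped at the exit time $\tau$ from any compact set.

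Next, for $R>0$ let $\tau_R = \inf\{t \ge t_0 : |X_t| \ge R\}$ be the first exit time from the open ball $B_R$, and recall that the explosion time satisfies $\tau_\infty = \lim_{R\to\infty}\tau_R$. Since the SDE coefficients are Lipschitz — hence bounded — on $\overline{B_R}$, the Itô-integral term appearing in $f(t\wedge\tau_R, X_{t\wedge\tau_R}) - f(t_0,x_0)$ is a true martingale, so Dynkin's formula \cite{oksendal2013stochastic} gives
$$\E\bigl[f(t\wedge\tau_R, X_{t\wedge\tau_R})\bigr] = V(x_0) + \E\!\left[\int_{t_0}^{t\wedge\tau_R}\mathcal{L}f(s,X_s)\,ds\right] \le V(x_0)$$
for every fixed $t \ge t_0$. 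On the event $\{\tau_R \le t\}$ we have $t\wedge\tau_R = \tau_R$, $|X_{\tau_R}| = R$ by path continuity, and $e^{-C(\tau_R - t_0)} \ge e^{-C(t-t_0)}$ since $C>0$; writing $V_R := \inf_{|x|\ge R}V(x)$ we therefore get
$$\E\bigl[f(t\wedge\tau_R, X_{t\wedge\tau_R})\bigr] \ge e^{-C(t-t_0)}\,V_R\,\prob(\tau_R \le t),$$
using also $f \ge 0$. Combining the two displays yields $\prob(\tau_R \le t) \le e^{C(t-t_0)}V(x_0)/V_R$.

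Finally, letting $R\to\infty$ and invoking the growth hypothesis $V_R\to\infty$, we conclude $\prob(\tau_R \le t) \to 0$; since $\{\tau_\infty \le t\} \subseteq \{\tau_R \le t\}$ for every $R$, this forces $\prob(\tau_\infty \le t) = 0$ for each $t \ge t_0$, and hence $\prob(\tau_\infty < \infty) = \lim_{n\to\infty}\prob(\tau_\infty \le n) = 0$, which is the assertion.

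The main obstacle is the bookkeeping around the stopping times and the precise role of the bounded set $\mathcal{D}$. One must verify carefully that the stochastic-integral part is a true (not merely local) martingale up to the bounded stopping time $t\wedge\tau_R$, which follows from boundedness of $b$ and $\sigma$ on $\overline{B_R}$, so that Dynkin's formula applies with no error term beyond the drift integral; and if the inequality $LV \le CV$ is only assumed on $\mathcal{D}^c$ rather than on all of $\mathbb{R}^d$, one should instead track the exit time from $B_R\setminus\overline{\mathcal{D}}$ and observe that the process cannot explode while confined to the bounded region $\mathcal{D}$, so only the excursions in $\mathcal{D}^c$ contribute to explosion. Beyond these points the argument is a routine application of Dynkin's formula and monotone convergence of probabilities.
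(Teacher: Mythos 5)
Your proposal is correct and follows essentially the same route as the paper's proof: discount $V$ by $e^{-C(t-t_0)}$ so the generator inequality yields a supermartingale, apply Dynkin's formula at the stopped time $t\wedge\tau_R$, restrict to the event $\{\tau_R\le t\}$ to obtain $\prob(\tau_R\le t)\le e^{C(t-t_0)}V(x_0)/\inf_{|x|\ge R}V(x)$, and let $R\to\infty$. Your additional care about the stochastic integral being a true martingale up to $t\wedge\tau_R$ and the final passage from $\prob(\tau_R\le t)\to 0$ to $\prob(\tau_\infty<\infty)=0$ tightens points the paper leaves implicit, but the argument is the same.
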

\begin{proof}Let $\tau_R:=\tau_R\wedge t$. Note, with the chain rule argument, we see
\begin{align*}
    L\{V(x)e^{c(t-t_0)}\}&=e^{-c(t-t_0)}LV(x)-CV(x)e^{-c(t-t_0)}\\
    &=e^{-c(t-t_0)}(LV(x)-CV(x))\\
    &\leq0 \:\:\:\text{since $e^{-c(t-t_0)}>0$ and $LV(x)\leq CV(x)$ for $x\in\mathcal{D}^c$}
\end{align*}
Now note from Dynkin's formula that we have 
$$\E(V(X_{\tau_R)}e^{-C(\tau_R-t_0)})=V(X_{t_0})+\E\left(\int_0^{\tau_R}LV(x)e^{-C(s-t_0)}ds\right)$$
$$\E(V(X_{\tau_R})e^{-C(\tau_R-t_0)})\leq\E(V(X_{\tau_R}))e^{-C(t-t_0)}\leq V(X_{t_0})$$
$$\E(V(X_{\tau_R}))e^{-C(t-t_0)}\leq V(X_{t_0})$$
$$\E(V(X_{\tau_R}))\leq e^{C(t-t_0)}V(X_{t_0})$$
Now, let us split our expectation into two cases where $t\leq\tau_R$ and  $t>\tau_R$. When $t>\tau_R$, we know from definition of hitting time that $X_t$ has reached $D^c$. Thus, our expectation on the left hand side becomes
$$\E(V(X_{\tau_R}))=\E(V(X_{\tau_R}))\ind_{\{\tau_R<t\}}+\E(V(X_{\tau_R}))\ind_{\{t<\tau_R\}}$$

Now note that $\tau_R<t$, we have $V(X_{\tau_R})\geq\inf_{|x|>R}V(x)$ by definition of hitting time. For $t<\tau_R$, we know $V\geq0$ from our positivity assumption. Thus, we have the following string of inequality
$$V(X_0)e^{C(t-t_0)}\geq \E(V({X_{\tau_R}}))\geq \inf_{|x|>R}V(x)\prob(\tau_R\leq t)+0\prob(\tau_R>t)$$
Thus, it follows that
$$\frac{V(X_0)e^{C(t-t_0)}}{\inf_{|x|>R}V(x)}\geq\prob(\tau_R\leq t)$$
Thus, as $R\rightarrow\infty$, we know from our condition $V\rightarrow\infty$ and thus $\prob(\tau_R\leq t)=0$ as desired. 
\end{proof}

\begin{remark}
In \cite{chow2014almost}, Khasminskii and Chow states how the following condition serves as a sufficient condition for non-explosion for a $d-$dimensional SDE with $x\in \mathbb{R}^d$ and $d\bold{X}_t=b(\bold{X}_t,t)dt+\sigma(\bold{X}_t,t)d\bold{W}_t$ with $A(t,x):=\sigma(\bold{X}_t,t)\sigma^T(\bold{X}_t,t)$:
$$\langle b(t,x),x\rangle+\frac{1}{2}tr(A(t,x))-\lambda_{\max}(A(t,x))\leq C\lVert x\rVert^2\ln(\lVert x\rVert^2)$$
Although Khasminskii indicates that the condition was derived based on $V(x)=\ln(\lVert x\rVert^2)$, the actual gritty calculation is omitted. We will show the calculation in this section. From \cite{oksendal2013stochastic}, it is well known that the multi variable generator is of the form 
$$LV=\sum_{i=1}^db_i(t,x)\frac{\partial V}{\partial x_i}+\frac{1}{2}\sum_{i,j=1}^da_{i,j}(t,x)\frac{\partial^2 V}{\partial x_i\partial x_j}$$
Let $V(x)=\ln(\lVert x\rVert^2)=\ln(x_1^2+\dots+x_d^2)$. Thus, it follows 
$$\frac{\partial V}{\partial x_i}=\frac{2x_i}{\lVert x\rVert^2}$$
$$\frac{\partial^2 V}{\partial x_i\partial x_j}=\frac{-2x_ix_j}{\lVert x\rVert^4}\:\:\:\:\text{for $i\neq j$}$$
$$\frac{\partial^2 V}{\partial x_i^2}=\frac{-4x_i^2}{\lVert x\rVert^4}+\frac{2}{\lVert x\rVert^2}$$
Thus, we have
\begin{align*}
    LV&=\frac{2\langle b(t,x),x\rangle}{\lVert x\rVert^2}+\frac{1}{2}\sum_{i, j=1}^d a_{i,j}(t,x)\frac{-4x_ix_j}{\lVert x\rVert^4}+\frac{1}{2}\sum_{i=1}^da_{ii}(t,x)\frac{2}{\lVert x\rVert^2}\\
    &=\frac{2\langle b(t,x),x\rangle}{\lVert x\rVert^2}-2\frac{\langle Ax,x\rangle}{\lVert x\rVert^4}+\frac{\text{Tr}(A)}{\lVert x\rVert^2}
\end{align*}
Now, note that $A=\sigma^T(x,t)\sigma(x,t)$ is a positive symmetric definite matrix. Thus, we know
$$0\leq \lambda_{\text{min}} \leq\dots\leq\lambda_{\text{max}}$$
It then follows that
$$\lambda_{\text{min}}\lVert x\rVert^2\leq\langle Ax,x\rangle\leq\lambda_{\text{max}}\lVert x\rVert^2$$
Thus, since $\langle Ax,x\rangle\leq\lambda_{\text{max}}\lVert x\rVert^2$, we know
\begin{align*}
    LV&=\frac{2\langle b(t,x),x\rangle}{\lVert x\rVert^2}-2\frac{\langle Ax,x\rangle}{\lVert x\rVert^4}+\frac{\text{Tr}(A)}{\lVert x\rVert^2}\\
    &\leq\frac{2\langle b(t,x),x\rangle}{\lVert x\rVert^2}-2\frac{\lambda_{\text{max}}}{\lVert x\rVert^2}+\frac{\text{Tr}(A)}{\lVert x\rVert^2}
\end{align*}
Thus, when $\langle b(t,x),x\rangle+\frac{1}{2}tr(A(t,x))-\lambda_{\max}(A(t,x))\leq C\lVert x\rVert^2\ln(\lVert x\rVert^2)$ holds, it follows that $LV\leq CV$ as desired. 
\end{remark}

\begin{example}
Consider the following one dimensional SDE $dX_t=X_t^pdt+\sigma dW_t$ where $\sigma\in\mathbb{R}$ is a constant. When $0<p<1$ is true, $\int_1^\infty\frac{1}{x^p}dx=\infty$ and our SDE remains non-explosive almost surely according to \cite{leon2011osgood}. We can also see from Chow and Khasminskii's condition that we have
$$x^px+\frac{1}{2}\sigma^2-\sigma^2<x^{p+1}\leq x^2\ln(x^2)$$
which is true since $\frac{x^{p+1}}{x^2\ln(x^2)}\rightarrow0$ as $x\rightarrow\infty$ by applying L'Hospital and noting $p<1$. 
\end{example}

\begin{example}
Consider the following 2 dimensional SDE with $p>2$  $d\bold{X}_t=\begin{pmatrix}\lVert X_t\rVert^p&0\\ 0&\lVert X_t\rVert^p\end{pmatrix}d\bold{W}_t$. Thus, $A=\begin{pmatrix}\lVert X_t\rVert^{2p}&0\\ 0&\lVert X_t\rVert^{2p}\end{pmatrix}$. Then, since we have a diagonal matrix, we know $\bold{X}_t$ is non-explosive from Khasminskii's condition
$$\frac{2\lVert x\rVert^{2p}}{2}-\lVert x\rVert^{2p}=0\leq C\lVert x\rVert^2\ln(\lVert x\rVert^2)$$
However, we can see that from dimension 3, our condition no longer holds despite being a martingale. 
\end{example}

Now, we will show the extension of Khasminskii's Theorem for jump processes. The extension for jump processes follows in an analogous manner to Khasminskii's theorem for non-explosion. The most notable difference is that our condition for Lyapunov function must also hold for the interior of $D$. 

\begin{theorem}Let our SDE with jumps be of the form $dX_t=b(X_t)dt+\sigma(X_t)dW_t+Y_tdN_t$ where $dN_t$ has constant poisson arrival. Let $\mathcal{D}\subset\mathbb{R}^d$ be a bounded open set with regular boundary $\partial \mathcal{D}$ and let $\mathcal{D}^c$ denote the complement of $\mathcal{D}$. Suppose our SDE parameters satisfy the Lipschitz condition on any compact subset of $\mathbb{R}^d$ for any $t\geq t_0$. Now, assume there exist a positive function $V(x)$ belonging to the class $\mathcal{C}^{2}$ and positive constants $C$ such that the following conditions hold 
$$LV\leq CV$$
$$V=\inf_{|x|>R}V(x)\rightarrow\infty\:\text{as }R\rightarrow\infty$$
Then it follows that $\prob(\tau_\infty<\infty)=0$. 
\end{theorem}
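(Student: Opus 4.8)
The plan is to run the argument of Theorem 2.2 essentially verbatim, with the diffusion generator replaced throughout by the full jump-diffusion generator $LV(x)=b(x)V'(x)+\frac{1}{2}\sigma^2(x)V''(x)+\lambda\,\E^{Y}\!\big[V(x+Y)-V(x)\big]$ computed in the preceding section, while keeping careful track of the single place where the jump term changes what is required. First I would fix $R$ large enough that $\overline{\mathcal{D}}\subset B_R:=\{x:|x|<R\}$, let $\tau_R$ be the first exit time of $X_t$ from $B_R$, and note that since $dN_t$ has constant (finite) Poisson intensity there are a.s. only finitely many jumps on any bounded time interval, so $\tau_R$ is a well-defined stopping time and $\tau_R\uparrow\tau_\infty$ as $R\to\infty$. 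Next I would introduce the time-weighted function $W(s,x):=V(x)e^{-C(s-t_0)}$ and compute, using the $\partial_s$ term from It\^o's formula together with the spatial generator above,
$$LW(s,x)=e^{-C(s-t_0)}\big(LV(x)-CV(x)\big)\le 0.$$

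Here lies the one genuine difference from the continuous case, and the step I expect to be the main obstacle. In Theorem 2.2 one needs $LV\le CV$ only on $\mathcal{D}^{c}$, because a continuous path that reaches $\mathcal{D}^{c}$ performs its excursions away from $\mathcal{D}$ entirely within $\mathcal{D}^{c}$, and Dynkin's formula can be applied excursion by excursion. A jump process can leap from the interior of $\mathcal{D}$ straight across $\partial\mathcal{D}$, so this excursion decomposition collapses; to make $\big(W(s\wedge\tau_R,X_{s\wedge\tau_R})\big)_s$ a genuine supermartingale I therefore need $LV\le CV$ to hold on all of $B_R$, in particular on the interior of $\mathcal{D}$ — exactly the strengthening of hypothesis flagged in the remark preceding the theorem. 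Making the supermartingale property rigorous also requires that Dynkin's formula genuinely applies to $W$ for the jump-diffusion, which in turn needs $\E^{Y}[V(x+Y)]$ to be finite and locally bounded in $x$; I would either impose a mild moment condition on the jump sizes $Y_t$ guaranteeing this or fold it into the standing assumptions.

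Granting the supermartingale property, the remainder is routine. I would apply Dynkin's formula at the bounded stopping time $\tau_R\wedge t$ to get $\E\big(V(X_{\tau_R\wedge t})e^{-C(\tau_R\wedge t-t_0)}\big)\le V(X_{t_0})$ and hence $\E\big(V(X_{\tau_R\wedge t})\big)\le e^{C(t-t_0)}V(X_{t_0})$. Splitting the expectation over $\{\tau_R\le t\}$ and its complement and discarding the nonnegative contribution on the complement gives
$$e^{C(t-t_0)}V(X_{t_0})\ \ge\ \E\big(V(X_{\tau_R})\ind_{\{\tau_R\le t\}}\big)\ \ge\ \Big(\inf_{|x|\ge R}V(x)\Big)\,\prob(\tau_R\le t),$$
where I note that a jump may cause $|X_{\tau_R}|>R$ rather than $|X_{\tau_R}|=R$, an overshoot that only helps since the infimum is taken over $|x|\ge R$. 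Dividing and letting $R\to\infty$, the coercivity hypothesis $\inf_{|x|\ge R}V(x)\to\infty$ forces $\prob(\tau_R\le t)\to 0$, so $\prob(\tau_\infty\le t)=0$ for every $t\ge t_0$, and therefore $\prob(\tau_\infty<\infty)=0$.
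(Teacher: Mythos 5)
Your proposal follows essentially the same route as the paper's proof: make $V(x)e^{-C(t-t_0)}$ a supermartingale via the generator inequality, apply Dynkin's formula at $\tau_R\wedge t$, split the expectation over $\{\tau_R\le t\}$ and its complement, and let the coercivity of $V$ force $\prob(\tau_R\le t)\to 0$. Your added observations — that the overshoot $|X_{\tau_R}|>R$ only helps, that $LV\le CV$ must hold inside $\mathcal{D}$ as well (which the paper flags in the preceding remark but then forgets in its own proof, still writing ``for $x\in\mathcal{D}^c$''), and that $\E^{Y}[V(x+Y)]$ must be finite for the generator to make sense — are genuine improvements in rigor over the paper's verbatim-copied argument, but the underlying proof is the same.
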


\begin{proof} Let $\tau_R:=\tau_R\wedge t$. We can use the same chain rule argument to note 
\begin{align*}
    L\{V(x)e^{c(t-t_0)}\}&=e^{-c(t-t_0)}LV(x)-CV(x)e^{-c(t-t_0)}\\
    &=e^{-c(t-t_0)}(LV(x)-CV(x))\\
    &\leq0 \:\:\:\text{since $e^{-c(t-t_0)}<0$ and $LV(x)\leq CV(x)$ for $x\in\mathcal{D}^c$}
\end{align*}
Then, we apply Dynkin's formula again to find 
$$\E(V(X_{\tau_R)}e^{-C(\tau_R-t_0)})=V(X_{t_0})+\E\left(\int_0^{\tau_R}LV(x)e^{-C(s-t_0)}ds\right)$$
$$\E(V(X_{\tau_R})e^{-C(\tau_R-t_0)})\leq\E(V(X_{\tau_R}))e^{-C(t-t_0)}\leq V(X_{t_0})$$
$$\E(V(X_{\tau_R}))e^{-C(t-t_0)}\leq V(X_{t_0})$$
$$\E(V(X_{\tau_R}))\leq e^{C(t-t_0)}V(X_{t_0})$$
Now, let us split our expectation into two cases where $t\leq\tau_R$ and  $t>\tau_R$. When $t>\tau_R$, we know from definition of hitting time that $X_t$ has reached $D^c$. Thus, our expectation on the left hand side becomes
$$\E(V(X_{\tau_R}))=\E(V(X_{\tau_R}))\ind_{\{\tau_R<t\}}+\E(V(X_{\tau_R}))\ind_{\{t<\tau_R\}}$$

Now note that $\tau_R<t$, we have $V(X_{\tau_R})\geq\inf_{|x|>R}V(x)$ by definition of hitting time. For $t<\tau_R$, we know $V\geq0$ from our positivity assumption. Thus, we have the following string of inequality
$$V(X_0)e^{C(t-t_0)}\geq \E(V({X_{\tau_R}}))\geq \inf_{|x|>R}V(x)\prob(\tau_R\leq t)+0\prob(\tau_R>t)$$
Thus, it follows that
$$\frac{V(X_0)e^{C(t-t_0)}}{\inf_{|x|>R}V(x)}\geq\prob(\tau_R\leq t)$$
Thus, as $R\rightarrow\infty$, we know from our condition $V\rightarrow\infty$ and thus $\prob(\tau_R\leq t)=0$ as desired. 
\end{proof}

\begin{remark}
    Note that our Lyapunov function cannot have points of singularities anymore. Thus, a potential candidate for our Lyapunov function is $V(x)=x^2$. Then, when we consider a Merton Jump Diffusion model with lognormal jump size as described by the following SDE
    $$dX_t=\mu X_tdt+\sigma X_tdW_t+X_t(Y_t-1)dN_t$$
    We can show stability with $V(x)=x^2$. Note $V(x)\rightarrow\infty$ as $|x|\rightarrow\infty$. Furthermore
    \begin{align*}
        LV&=b(t,X_t)V'(X_t)+\frac{1}{2}\sigma(t,X_t)^2V''(X_t)+\lambda( X_t^2\E(Y_t^2)-X_t^2)\\
        &=2\mu X_t^2+\sigma^2X_t^2+\lambda(e^{2\mu_J+2\sigma^2_J}-1)X_t^2\\
        &=(2\mu+\sigma^2+\lambda(e^{2\mu_J+2\sigma^2_J}-1))X_t^2\\
        &\leq (2\mu+\sigma^2+\lambda(e^{2\mu_J+2\sigma^2_J}-1))V(X_t)
    \end{align*}
    Thus, we have found our $C$ as desired.  
\end{remark}

\begin{theorem}
Corollary 3.1 from Khasminskii\cite{khasminskii2012stochastic}. Assume the conditions from theorem 3.1 hold. Now, let $\mathcal{D}$ be a set with regular boundaries $\partial\mathcal{D}$. Let $D_n$ be an increasing sequence such that $\cup^\infty \mathcal{D}_n=\mathcal{D}$. Assume $V(t,x)$ be twice differentiable with respect to $x$ and once with respect to $t$. Let $X_0\in \mathcal{D}$. 
\end{theorem}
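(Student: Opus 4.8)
The plan is to reproduce the supermartingale argument used in the two non-explosion theorems above, but to localise it along the exhausting sequence $\mathcal{D}_n$ rather than along Euclidean balls. First I would set $\tau_n := \inf\{s \ge t_0 : X_s \notin \mathcal{D}_n\}$ and consider the process $M_s := e^{-C(s-t_0)}V(s \wedge \tau_n,\, X_{s \wedge \tau_n})$. Running the same It\^o/chain-rule computation as before --- now retaining the $\partial_t$ term since $V$ depends explicitly on $t$ --- and invoking the hypothesis $LV \le CV$, which by assumption holds throughout $\mathcal{D}$ and hence on each $\mathcal{D}_n$, shows that the $dt$-drift of $M$ is nonpositive, so $M$ is a supermartingale. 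Because each $\mathcal{D}_n$ is relatively compact in $\mathcal{D}$, the function $V$ together with its relevant derivatives is bounded on $[t_0,t] \times \overline{\mathcal{D}_n}$, so Dynkin's formula applies on $[t_0,\, t \wedge \tau_n]$ with no integrability obstruction.

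Next, optional stopping at the deterministic time $t$ gives
$$\E\!\left[e^{-C(t \wedge \tau_n - t_0)} V(t \wedge \tau_n,\, X_{t \wedge \tau_n})\right] \le V(t_0, X_0).$$
Splitting the left side on $\{\tau_n \le t\}$ and $\{\tau_n > t\}$, discarding the nonnegative second term and using that $X_{\tau_n} \in \partial \mathcal{D}_n$ on the first, I obtain
$$e^{-C(t-t_0)}\left(\inf_{s\in[t_0,t],\,y\in\partial\mathcal{D}_n} V(s,y)\right)\prob(\tau_n \le t) \le V(t_0, X_0),$$
so $\prob(\tau_n \le t)$ is bounded by $e^{C(t-t_0)} V(t_0,X_0)$ divided by the infimum of $V$ over the moving boundary $\partial \mathcal{D}_n$.

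Finally I would let $n \to \infty$. Since $\mathcal{D}_n \uparrow \mathcal{D}$ the exit times increase to $\tau_{\mathcal{D}} := \inf\{s \ge t_0 : X_s \notin \mathcal{D}\}$, so the event $\{\tau_{\mathcal{D}} \le t\}$ is captured by $\bigcap_n \{\tau_n \le t\}$; combined with the divergence of $\inf_{\partial \mathcal{D}_n} V$ forced by the growth/barrier condition on $V$ along the exhaustion, the right-hand bound tends to $0$, giving $\prob(\tau_{\mathcal{D}} \le t) = 0$ for every $t$ and hence $\prob(\tau_{\mathcal{D}} < \infty) = 0$ after a union over $t \in \mathbb{N}$ (if instead the stated conclusion is the quantitative estimate, the displayed bound on $\prob(\tau_n \le t)$ simply passes to the limit). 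The step I expect to be the genuine obstacle is not the estimate itself, which is essentially the computation already carried out twice, but the limiting geometry: I must confirm that the exhaustion is chosen so that $\partial \mathcal{D}_n$ actually recedes toward $\partial \mathcal{D}$, that $\tau_n \uparrow \tau_{\mathcal{D}}$ even when the sample paths are discontinuous and could in principle leap past several $\mathcal{D}_n$ at once, and that the regular-boundary hypothesis is exactly what legitimises the identification $X_{\tau_n} \in \partial \mathcal{D}_n$ rather than an exit through a polar set. These path-regularity and measure-theoretic points are where I would concentrate the rigor.
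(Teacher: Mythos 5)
Your argument is essentially the one the paper intends but does not actually write down: the paper's ``proof'' of this corollary consists of the single remark that it ``follows analogously as before but with slight modifications to our hitting time conditions,'' deferring entirely to Example 2.9, where exactly your localization $\tau_n=\inf\{t>0: X_t\le \tfrac1n\}$ and the bound $\prob(\tau_n\le t)\le e^{Ct}/(nX_0)$ appear. Your proposal is therefore more complete than the source; in particular, you correctly supply the two ingredients the paper's statement omits outright --- the conclusion itself (that $X$ never exits $\mathcal{D}$ almost surely) and the barrier condition $\inf_{s\in[t_0,t],\,y\in\partial\mathcal{D}_n}V(s,y)\to\infty$ along the exhaustion, without which the final limit $n\to\infty$ gives nothing.
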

\begin{proof} The proof follows analogously as before but with slight modifications to our hitting time conditions. We will provide an example to illuminate the details, but for the sake of brevity the proof will be omitted. 
\end{proof}

\begin{example}
Consider an SDE of the form: $dX_t=X_t^{-\alpha}dt+dW_t$ where $\alpha>1$. Note that when $x=0$, $b(x)=x^{-\alpha}=\infty$. Now, let $D=(0,\infty)$, and $D_n=(\frac{1}{n},\infty)$. Let $V(x)=\frac{1}{x}$. Then $V'(x)=-x^{-2}$ and $V''(x)=2x^{-3}$. Note:
\begin{align*}
    LV&=-x^{-2-\alpha}+x^{-3}\\
    &=x^{-1}(x^{-2}-x^{1-\alpha})
\end{align*}
Note since $\alpha>1$, we know $1+\alpha>2$. Thus:
$$\lim_{x\rightarrow0}(x^{-2}-x^{-1-\alpha})=-\infty$$
$$\lim_{x\rightarrow\infty}(x^{-2}-x^{-1-\alpha})=0$$
$$\sup_{x>0}(x^{-2}-x^{-1-\alpha})=C<\infty$$
Note, since we have $LV\leq CV$, we can use the chain rule argument to note 
$$\E(X_t)^{-1}\leq X_0^{-1}e^{Ct}$$
Now let $\tau_n:=\inf\{t>0:X_t\leq\frac{1}{n}\}$. Then it follows: 
$$\E(X_{t\wedge\tau_n})^{-1}\leq X_0^{-1}e^{Ct\wedge\tau_n}$$
Now when $\tau_n\leq t$, we know $X_{t\wedge \tau_n}=\frac{1}{n}$. Thus, we have
$$n\prob(\tau_n\leq t)\leq X_0^{-1}e^{Ct}$$
$$\prob(\tau_n\leq t)\leq\frac{e^{Ct}}{nX_0}$$
Now, as $n\rightarrow\infty$, we know $\prob(\tau_n\leq t)=0$ as desired. Thus, we have shown that $X_t$ cannot become negative or explode to infinity and will always reside within $D=(0,\infty)$.
\end{example}

\section{Positive Probability of Explosion Through Lyapunov Functions}
\begin{theorem} Chow and Khasminskii's Theorem 1. Let $\mathcal{D}\subset\mathbb{R}^d$ be a bounded open set with regular boundary $\partial \mathcal{D}$ and let $\mathcal{D}^c$ denote the complement of $\mathcal{D}$. Suppose our SDE parameters satisfy the Lipschitz condition on any compact subset of $\mathbb{R}^d$ for any $t\geq t_0$. Now, assume there exist a positive function $V(t,x)$ belonging to the class $\mathcal{C}^{1,2}([t_0,\infty]\times\mathcal{D}^c)$ and positive constants $K_1,\:K_2,\:K_3$ and $C$ such that the following conditions hold 
$$\sup_{t\geq t_0,x\in\mathcal{D}^c}V(x)=K_1<\infty$$
$$\sup_{t\geq t_0,x\in\mathcal{D}}V(x)=K_2<\inf_{t\geq t_0x\in\Gamma}V(x)=K_3$$
for some set $\Gamma\subset\mathcal{D}^c$, and 
$$LV(x)\geq CV(x)$$
for $t\geq t_0,\: x\in\mathcal{D}^c$. Then the solution $X_t$ with the initial condition $X_{t_1}=x\in\Gamma$ for some $t_1\geq t_0$ has an explosion with positive probability. 
\end{theorem}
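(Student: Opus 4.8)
The plan is to run Khasminskii's sub-/super-martingale scheme, but in the direction opposite to Theorem 2.2 and with an exponential weight that is essential for detecting \emph{finite-time} explosion rather than mere unboundedness over an infinite horizon. Set
\[
U(t,x):=V(t,x)\,e^{-C(t-t_1)},\qquad (t,x)\in[t_1,\infty)\times\mathcal{D}^c .
\]
Using the form of the generator recorded at the start of Section 2 (including the $\partial_t$ term for time-dependent functions), the hypothesis $LV\geq CV$ on $\mathcal{D}^c$ gives $LU=e^{-C(t-t_1)}(LV-CV)\geq 0$ on $\mathcal{D}^c$; hence, by Dynkin's formula, $U(t\wedge\tau,X_{t\wedge\tau})$ is a submartingale for any stopping time $\tau$ that keeps the process inside $\mathcal{D}^c$ and inside a fixed ball where $V$ is $\mathcal{C}^{1,2}$.

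First I would introduce the localizing stopping times $\sigma:=\inf\{s\geq t_1:X_s\in\mathcal{D}\}$ (so $X_\sigma\in\partial\mathcal{D}$ by path-continuity) and $\tau_n:=\inf\{s\geq t_1:|X_s|\geq n\}$, with $\tau_\infty=\lim_n\tau_n$ the explosion time, and put $\tau:=\sigma\wedge\tau_n$. The submartingale inequality together with $X_{t_1}=x\in\Gamma$ gives, for $t\geq t_1$,
\[
\E\big[U(t\wedge\tau,X_{t\wedge\tau})\big]\;\geq\;U(t_1,x)=V(t_1,x)\;\geq\;K_3 .
\]
Next I would split the left-hand expectation according to which of the three events $\{t\leq\tau\}$, $\{\sigma\leq t\wedge\tau_n\}$, $\{\tau_n\leq t\wedge\sigma\}$ occurs and bound $U$ on each: on the first, $U\leq K_1e^{-C(t-t_1)}$ since $V\leq K_1$ on $\mathcal{D}^c$; on the second, $V(X_\sigma)\leq K_2$ by continuity of $V$ up to $\partial\mathcal{D}$, so $U\leq K_2$; on the third, $U\leq K_1e^{-C(\tau_n-t_1)}$. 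Combining these yields
\[
K_3\;\leq\;K_1e^{-C(t-t_1)}+K_2+K_1\,\E\!\big[e^{-C(\tau_n-t_1)}\ind_{\{\tau_n\leq t\wedge\sigma\}}\big].
\]

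I would then let $t\to\infty$ (monotone convergence on the last term, with the convention $e^{-\infty}=0$ so that $\{\tau_n=\infty\}$ contributes nothing) to get $K_1\,\E[e^{-C(\tau_n-t_1)}\ind_{\{\tau_n\leq\sigma\}}]\geq K_3-K_2>0$ for \emph{every} $n$, and finally let $n\to\infty$: since $\tau_n\uparrow\tau_\infty$ and the integrand decreases in $n$ and is bounded by $1$, dominated convergence gives $K_1\,\E[e^{-C(\tau_\infty-t_1)}\ind_{\{\tau_\infty\leq\sigma\}}]\geq K_3-K_2>0$. A nonnegative random variable with strictly positive expectation is positive on a set of positive probability, and $e^{-C(\tau_\infty-t_1)}\ind_{\{\tau_\infty\leq\sigma\}}>0$ exactly on $\{\tau_\infty<\infty\}\cap\{\tau_\infty\leq\sigma\}$; hence $\prob(\tau_\infty<\infty)>0$, and in fact explosion occurs with positive probability \emph{before} the path ever reaches $\mathcal{D}$.

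The step I expect to be the main obstacle — and the one clarifying why the weight $e^{-C(t-t_1)}$ and the stronger condition $LV\geq CV$ (not merely $LV\geq 0$) are needed — is the passage from "the path reaches every level $n$ before entering $\mathcal{D}$'' to "$\tau_\infty<\infty$'': the weight forces $\E[e^{-C(\tau_n-t_1)}\ind_{\{\tau_n\leq\sigma\}}]$ to stay bounded below uniformly in $n$, and since $e^{-C(\tau_n-t_1)}\to 0$ whenever $\tau_n\to\infty$, such a bound can only persist on the event where the $\tau_n$ accumulate at a \emph{finite} time. Secondary care is needed for the boundary estimate $V(X_\sigma)\leq K_2$ (reading the hypothesis with continuity on $\overline{\mathcal{D}}$, so that $\sup_{\mathcal{D}}V$ controls values on $\partial\mathcal{D}$) and for the legitimacy of Dynkin's formula on the stochastic interval $[t_1,\tau)$, which is precisely what the localization by $\tau_n$ together with the $\mathcal{C}^{1,2}(\mathcal{D}^c)$ regularity supplies.
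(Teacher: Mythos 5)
Your proposal is correct and follows essentially the same route as the paper's proof: the exponentially weighted Lyapunov function $V e^{-C(t-t_1)}$, the submartingale/Dynkin inequality on $\mathcal{D}^c$, the three-way split of the stopped expectation (exit of the ball of radius $n$, entry into $\mathcal{D}$, neither), and the double limit $t\to\infty$, $n\to\infty$ yielding $\E[e^{-C(\tau_\infty-t_1)}\ind_{\{\tau_\infty\le\sigma\}}]\ge (K_3-K_2)/K_1>0$. Your write-up is in fact somewhat more careful than the paper's at the limiting step (monotonicity of $e^{-C(\tau_n-t_1)}\ind_{\{\tau_n\le\sigma\}}$ in $n$ and retention of the indicator $\ind_{\{\tau_\infty\le\sigma\}}$ in the final bound), but the underlying argument is the same.
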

\begin{proof} Let $\tau_R:=\min\{\tau_{R(t)},t,\tau_{\partial \mathcal{D}}\}$ (the process stops once it hits the boundary of $\mathcal{D}$ or leaves the disk $R$). The following is intended to serve as a more intuitive explanation of the proof provided by Khasminskii and Chow. Consider a disk $R$ of radius $r$, a disk inside $D$ in $R$ and a disk $\Gamma$ encircling $D$ (for our construction, $\Gamma\cap D=\emptyset$) as shown below. 
\begin{center}
    \begin{tikzpicture}
    \draw[dotted](0,0)circle(2cm); 
    \node at(135:2.3cm){$R$};
    \draw (0,0) circle(1.5cm); 
    \node at(135:1.8cm) {$\Gamma$};
    \draw[dotted](0,0)circle(1cm);
    \node at(135:1.3cm){$D$};
\end{tikzpicture}
\end{center}
Let our process start from $\Gamma$ and our goal is to show there is a positive probability of reaching the boundary of $R$. Note, for $x\in\mathcal{D}^c$, we have by applying the product rule 
\begin{align*}
    L\{V(x)e^{c(t-t_0)}\}&=e^{-c(t-t_0)}LV(x)-CV(x)e^{-c(t-t_0)}\\
    &=e^{-c(t-t_0)}(LV(x)-CV(x))\\
    &\geq0 \:\:\:\text{since $e^{-c(t-t_0)}>0$ and $LV(x)\geq CV(x)$ for $x\in\mathcal{D}^c$}
\end{align*}
Now, from Dynkin's formula, we find 
$$\E(V(X_{\tau_{R}})e^{-C(\tau_R-t_0)})=V(X_{t_0})+\E\left(\int_0^{\tau_R}LV(x)e^{-C(t-t_0)}ds\right)$$
Thus
$$\E(V(\tau_R)e^{-C(\tau_R-t_0)})\geq V(X_{t_0}).$$
Now, we can divide our expectation into three cases: the case where we have hit $R$, the case where we have hit $\partial D$, and the case where we have hit neither $R$ or $\partial D$. This splitting of expectation can be denoted in the following manner 
$$\E(V(X_{\tau_R})e^{-C(\tau_R-t_0)})\ind_{\{\tau_R<t\wedge\tau_{\partial D}\}}+\E(V(X_{\tau_R})e^{-C(\tau_R-t_0)})\ind_{\{\tau_{\partial D}<t\wedge\tau_{R}\}}$$
$$+\E(V(X_t)e^{-C(t-t_0)})\ind_{\{t<\tau_R\wedge\tau_{\partial D}\}}\geq V(X_{t_0})$$
Now from the conditions on our Lyapunov function, we note 
\begin{align*}
    \E(V(X_{\tau_R})e^{-C(\tau_R-t_0)})\ind_{\{\tau_{\partial R}<t\wedge\tau_{\partial D}\}}&\geq V(X_{t_0})-K_2-K_1e^{-C(t-t_0)}\\
    &\geq K_3-K_2-K_1e^{-C(t-t_0)}
\end{align*}
Now, as $R\rightarrow\infty,\:t\rightarrow\infty$, we note
$$\E(K_1e^{-C(\tau_\infty-t_0)})\geq K_3-K_2>0$$
Thus, it follows that $\prob(\tau_\infty<\infty)>0$ since $\prob(\tau_\infty=\infty)=1$ would imply that our expectation on the left is zero.
\end{proof}

\begin{remark} Analogous to Remark 2.3, Khasminskii and Chow provides the condition for positive probability for explosion in \cite{chow2014almost}
$$\langle b(t,x),x\rangle+\frac{1}{2}\text{Tr}(A(t,x))-\lambda_{\text{max}}\left(1+\frac{1+\epsilon}{\ln(\lVert x\rVert^2)}\right)\geq C\lVert x\rVert^2(\ln(\lVert x\rVert^2))^{1+\epsilon}$$
The derivation follows the exact same steps as shown in Remark 2.3 but with $V(x)=K-\frac{1}{\ln(\lVert x\rVert)^\epsilon}$ and will thus be skipped. 
\end{remark}
\begin{example}
    Consider our 3 dimensional extension of example 2.4. Let our SDE be of the form: $d\bold{X}_t=\begin{pmatrix}\lVert X_t\rVert^p&0&0\\ 0&\lVert X_t\rVert^p&0\\ 0&0&\lVert X_t\rVert^p\end{pmatrix}d\bold{W}_t$, where $p>1$. Note $A=\begin{pmatrix}\lVert X_t\rVert^{2p}&0&0\\ 0&\lVert X_t\rVert^{2p}&0\\ 0&0&\lVert X_t\rVert^{2p}\end{pmatrix}$. Then, from remark 3.2, we need to show there exists $C$ and $\epsilon$ such that 
    $$\frac{3}{2}\lVert x\rVert^{2p}-\lVert x\rVert^{2p}\left(1+\frac{1+\epsilon}{\ln(\lVert x\rVert^2)}\right)\geq C\lVert x\rVert^2(\ln(\lVert x\rVert^2)^{1+\epsilon}$$
    holds for all $\lVert x\rVert>R$. Note that it suffices to check 
    $$\frac{\ln(\lVert x\rVert^2)^{1+\epsilon}}{\lVert x\rVert^{p-1}}\rightarrow0\:\text{as $\lVert x\rVert\rightarrow\infty$}$$
    Note by properties of exponents and L'Hospital, we have 
    $$\frac{2^{1+\epsilon}\frac{(1+\epsilon)\ln(\lVert x\rVert)}{\lVert x\rVert}}{\frac{p-1}{\lVert x\rVert^{2-p}}}\equiv \frac{\ln(\lVert x\rVert)}{\lVert x\rVert^{p-1}}\rightarrow 0$$
    Thus, despite being a martingale, our process without drift can explode with positive probability from the noise alone in dimension 3. 
\end{example}

Now, let us extend theorem 3.1 to jump processes 

\begin{theorem} Extending Theorem 3.1 to Jump Processes. Let our SDE be of the form  $dX_t=b(X_t,t)dt+\sigma(X_t,t)dW_t+Y_tdN_t$ where $Y_t$ is the jump size and $N_t$ is poisson with constant intensity. Now, let $\mathcal{D}\subset\mathbb{R}^d$ be a bounded open set with regular boundary $\partial \mathcal{D}$ and let $\mathcal{D}^c$ denote the complement of $\mathcal{D}$. Suppose our SDE parameters satisfy the Lipschitz condition on any compact subset of $\mathbb{R}^d$ for any $t\geq t_0$. Now, assume there exist a positive function $V(t,x)$ belonging to the class $\mathcal{C}^{1,2}([t_0,\infty]\times\mathcal{D}^c)$ and positive constants $K_1,\:K_2,\:K_3$ and $C$ such that the following conditions hold  
$$\sup_{t\geq t_0,x\in\mathcal{D}^c}V(x)=K_1<\infty$$
$$\sup_{t\geq t_0,x\in\mathcal{D}}V(x)=K_2<\inf_{t\geq t_0x\in\Gamma}V(x)=K_3$$
for some set $\Gamma\subset\mathcal{D}^c$, and 
$$LV(x)\geq CV(x)$$
for $t\geq t_0,\: x\in\mathcal{D}^c$.
\end{theorem}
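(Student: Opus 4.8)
The plan is to run the same Lyapunov/Dynkin argument used in the proof of Theorem 3.1, the only structural change being that the generator $L$ now carries the jump term $\lambda\,\E^{Y}\!\bigl(V(x+Y)-V(x)\bigr)$ computed in Section 2. The first observation I would make is that the product-rule computation goes through unchanged: because the weight $e^{-C(t-t_0)}$ does not depend on $x$, it factors out of \emph{every} piece of the generator, including the integral part, so that for $t\geq t_0$ and $x\in\mathcal{D}^{c}$
\[
L\{V(x)e^{-C(t-t_0)}\}=e^{-C(t-t_0)}\bigl(LV(x)-CV(x)\bigr)\geq 0 .
\]
Then I would apply Dynkin's formula for jump diffusions to the function $(t,x)\mapsto V(x)e^{-C(t-t_0)}$ and the stopping time $\tau_{R}$, giving, for the solution started at $X_{t_1}=x\in\Gamma$ (I would take $t_1=t_0$ without loss of generality, otherwise absorbing $e^{-C(t_1-t_0)}$ into the constants),
\[
\E\bigl(V(X_{\tau_{R}})e^{-C(\tau_{R}-t_0)}\bigr)\ \geq\ V(x)\ \geq\ K_{3}.
\]

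The genuinely new point, and where I expect the work to lie, is the definition of the stopping time and the case split, since a jump can carry the path over the circle of radius $r$ or straight into the interior of $\mathcal{D}$ without ever touching either boundary. So in place of the first hitting time of $\partial\mathcal{D}$ I would use the first entrance time $\tau_{\mathcal{D}}:=\inf\{t:X_t\in\mathcal{D}\}$, in place of the first hitting time of the circle I would use the first time $\|X_t\|\geq r$, and set $\tau_{R}:=\min\{\inf\{t:\|X_t\|\geq r\},\,t,\,\tau_{\mathcal{D}}\}$. With this choice the stopped path stays in $\mathcal{D}^{c}$ on $[t_0,\tau_R)$, so $V$ is $\mathcal{C}^{2}$ along the continuous portion and the hypothesis $LV\geq CV$ is in force inside the running integral; the jump increments only need $V$ bounded and measurable on $\mathbb{R}^{d}$, which holds because $V\leq K_1$ on $\mathcal{D}^{c}$ and $V\leq K_2<K_3\leq K_1$ on $\mathcal{D}$, hence $V\leq K_1$ everywhere. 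I would then split $\E\bigl(V(X_{\tau_R})e^{-C(\tau_R-t_0)}\bigr)$ over the three events ``exit the disk first'', ``enter $\mathcal{D}$ first'' (where $V(X_{\tau_R})\leq K_2$) and ``reach time $t$ first'' (where $V(X_t)e^{-C(t-t_0)}\leq K_1e^{-C(t-t_0)}$), discard all but the first term on one side, and obtain
\[
K_1\,\E\bigl(e^{-C(\tau_R-t_0)}\ind_{\{\|X\|\text{ reaches }r\text{ before }t\wedge\tau_{\mathcal D}\}}\bigr)\ \geq\ K_3-K_2-K_1e^{-C(t-t_0)} .
\]

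Finally I would send $t\to\infty$ and then $r\to\infty$. The term $K_1e^{-C(t-t_0)}$ vanishes; on the left, by monotone convergence in $t$ and then dominated convergence in $r$ (with the convention $e^{-C\cdot\infty}=0$), the event ``$\|X\|$ reaches every radius $r$ before entering $\mathcal{D}$'' is exactly $\{\tau_\infty<\tau_{\mathcal D}\}$, so the limit is $K_1\,\E\bigl(e^{-C(\tau_\infty-t_0)}\ind_{\{\tau_\infty<\tau_{\mathcal D}\}}\bigr)\geq K_3-K_2>0$. This forces $\prob(\tau_\infty<\tau_{\mathcal D})>0$, hence $\prob(\tau_\infty<\infty)>0$, since $\prob(\tau_\infty=\infty)=1$ would make the left-hand side zero. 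I expect the only real obstacle to be this overshoot bookkeeping — verifying that replacing boundary hitting times by first-entrance and first-exit times preserves the inequality chain, and that Dynkin's formula is legitimately applicable to a Lyapunov function smooth only on $\mathcal{D}^{c}$ — rather than anything in the analytic estimates, which are verbatim the same as in the continuous case of Theorem 3.1.
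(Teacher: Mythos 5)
Your proposal is correct and follows essentially the same route as the paper's proof: the unchanged product-rule computation, Dynkin's formula for the stopped process, the three-way split of the expectation, and the replacement of the boundary hitting time of $\partial\mathcal{D}$ by the first entrance time into $\mathcal{D}$ (and of the circle-hitting time by a first-exit time) to account for jumps overshooting the boundary. Your added bookkeeping on the global boundedness of $V$ and the limit passage in $t$ and $r$ is more careful than the paper's, but it is a refinement of the same argument rather than a different one.
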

\begin{proof} Note that even with the presence of jumps, since our choice of $V$ depends solely on $x$, we can still apply the product rule argument used in theorem 1.1 
\begin{align*}
    L\{V(x)e^{c(t-t_0)}\}&=e^{-c(t-t_0)}LV(x)-CV(x)e^{-c(t-t_0)}\\
    &=e^{-c(t-t_0)}(LV(x)-CV(x))\\
    &\geq0 \:\:\:\text{since $e^{-c(t-t_0)}>0$ and $LV(x)\geq CV(x)$ for $x\in\mathcal{D}^c$}
\end{align*}
However, since our process can jump over the boundary of $D$, we simply change our hitting time condition to be the whole set $D$ and the proof follows analogously. From Dynkin's formula, we find  
$$\E(V(X_{\tau_{R}})e^{-C(\tau_R-t_0)})=V(X_{t_0})+\E\left(\int_0^{\tau_R}LV(x)e^{-C(t-t_0)}ds\right)$$
Thus 
$$\E(V(\tau_R)e^{-C(\tau_R-t_0)})\geq V(X_{t_0}).$$
Now, we can divide our expectation into three cases: the case where we have hit $R$, the case where we have hit $\partial D$, and the case where we have entered neither $R$ or $D$. This splitting of expectation can be denoted in the following manner 
$$\E(V(X_{\tau_R})e^{-C(\tau_R-t_0)})\ind_{\{\tau_R<t\wedge\tau_{ D}\}}+\E(V(X_{\tau_R})e^{-C(\tau_R-t_0)})\ind_{\{\tau_{ D}<t\wedge\tau_{R}\}}$$
$$+\E(V(X_t)e^{-C(t-t_0)})\ind_{\{t<\tau_R\wedge\tau_{D}\}}\geq V(X_{t_0})$$
Now from the conditions on our Lyapunov function, we note 
\begin{align*}
    \E(V(X_{\tau_R})e^{-C(\tau_R-t_0)})\ind_{\{\tau_R<t\wedge\tau_{ D}\}}&\geq V(X_{t_0})-K_2-K_1e^{-C(t-t_0)}\\
    &\geq K_3-K_2-K_1e^{-C(t-t_0)}
\end{align*}
Now, as $R\rightarrow\infty,\:t\rightarrow\infty$, we note
$$\E(K_1e^{-C(\tau_\infty-t_0)})\geq K_3-K_2>0$$
Thus, it follows that $\prob(\tau_\infty<\infty)>0$ since $\prob(\tau_\infty=\infty)=1$ would imply that our expectation on the left is zero.
\end{proof}

\section{Almost Sure Explosion}
In Khasminskii and Chow, a theorem containing the sufficient condition for almost sure explosion using Lyapunov functions is provided. In this section, we will provide an intuitive explanation of their proof and provide a sufficient condition for almost sure explosion in 1 dimension, namely ellipticity. 

\begin{theorem} Khasminskii and Chow's Almost Sure Explosion \cite{chow2014almost}. Suppose the conditions for theorem 3.1 hold. Assume
$$\inf_{t\geq t_0,x\in\mathcal{D}^c}V(t,x)=K_0>0$$
and for $x\in\partial D$
$$\prob(\tau_\Gamma<\infty)=1$$
Then for any $t\geq t_0,x\in\mathbb{R}^n$, $\prob(\tau_\infty<\infty)=1$. 
\end{theorem}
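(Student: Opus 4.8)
The plan is to combine the positive-probability-of-explosion mechanism from Theorem 3.1 with a renewal/strong-Markov argument that upgrades "positive probability" to "probability one." The key structural observation is that Theorem 3.1 already gives us a uniform lower bound: starting from any $x \in \Gamma$, there is some $p_0 > 0$ with $\prob_x(\tau_\infty < \infty) \geq p_0$. The extra hypotheses — $\inf_{t \geq t_0, x \in \mathcal{D}^c} V(t,x) = K_0 > 0$ and $\prob_x(\tau_\Gamma < \infty) = 1$ for $x \in \partial \mathcal{D}$ — are exactly what is needed to guarantee that if the process fails to explode on a given "attempt," it returns to the starting configuration $\Gamma$ almost surely, so it gets infinitely many independent attempts, each with success probability bounded below.

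First I would make the uniform lower bound from Theorem 3.1 explicit and uniform over starting points in $\Gamma$: revisiting that proof, the bound $\E_x(K_1 e^{-C(\tau_\infty - t_0)}) \geq K_3 - K_2$ holds for every $x \in \Gamma$, and since $K_1 e^{-C(\tau_\infty-t_0)} \leq K_1$ on $\{\tau_\infty < \infty\}$ and the integrand is zero on $\{\tau_\infty = \infty\}$, we get $\prob_x(\tau_\infty < \infty) \geq (K_3 - K_2)/K_1 =: p_0 > 0$ uniformly in $x \in \Gamma$. Here is where the role of $K_0 > 0$ enters: the bound $V(t,x) \geq K_0$ on all of $\mathcal{D}^c$ (not just a supremum bound) is what keeps the submartingale $V(X_t)e^{-C(t-t_0)}$ from degenerating and ensures the relevant expectations stay controlled as $R \to \infty$; I would check that the splitting-of-expectation step in Theorem 3.1's proof goes through with these uniform constants.

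Next I would set up the renewal loop. Define stopping times inductively: $\sigma_0 = t_1$ (the start, with $X_{t_1} \in \Gamma$); having reached $\Gamma$ at time $\sigma_k$, run the process; on the event that it does not explode, it must — having started in $\Gamma \subset \mathcal{D}^c$ — at some point either explode or enter $\mathcal{D}$ (this uses that the relevant excursion is trapped; here I need to argue the process cannot simply wander in $\mathcal{D}^c$ forever without exploding, which again leans on the Lyapunov inequality $LV \geq CV$ forcing $V$, hence $\|X\|$, to grow). Once it enters $\mathcal{D}$, in particular it hits $\partial \mathcal{D}$, and from $\partial \mathcal{D}$ the hypothesis $\prob(\tau_\Gamma < \infty) = 1$ returns it to $\Gamma$; call that time $\sigma_{k+1}$. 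By the strong Markov property, conditional on $\mathcal{F}_{\sigma_{k+1}}$ and on non-explosion so far, the next attempt again succeeds with probability $\geq p_0$. Hence $\prob(\text{no explosion}) \leq \prod_{k=1}^{n}(1 - p_0) \to 0$, giving $\prob_x(\tau_\infty < \infty) = 1$ first for $x \in \Gamma$ and then, by the strong Markov property and $\prob_x(\tau_\Gamma \wedge \tau_\infty < \infty) = 1$ for general $x$, for every $x \in \mathbb{R}^n$.

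**The main obstacle** will be rigorously justifying the dichotomy "either explode or re-enter $\mathcal{D}$" on each excursion — i.e., ruling out that the process stays in $\mathcal{D}^c$ for all time without $\tau_\infty < \infty$. One clean way is to observe that on $\{$process stays in $\mathcal{D}^c$ on $[\sigma_k, \infty)\}$, the process $M_t := V(X_t)e^{-C(t-t_0)}$ is a nonnegative submartingale, hence converges a.s.; combined with $LV \geq CV$ and a lower bound on the diffusion's fluctuations (or, in the elliptic/1-D setting the paper emphasizes, a direct comparison argument), this forces $V(X_t) \to \infty$ in finite time with probability one on that event — contradiction unless the excursion ends by entering $\mathcal{D}$. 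Making this step airtight, and confirming that all the constants $p_0, K_0, K_1, K_2, K_3$ are genuinely uniform across the countably many renewal cycles, is where the real work lies; the renewal bookkeeping itself is routine once the per-cycle success probability is pinned down.
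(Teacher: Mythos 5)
The paper does not actually prove this theorem---its ``proof'' defers entirely to Chow and Khasminskii---but the renewal argument you describe (a uniform per-visit success probability $p_0=(K_3-K_2)/K_1$ on $\Gamma$ extracted from Theorem 3.1's Dynkin bound, almost-sure return to $\Gamma$ via $\partial\mathcal{D}$ on failure, and geometric decay of the no-explosion probability by the strong Markov property) is exactly the mechanism sketched in the paper's Remark 4.2, so your approach is the intended one. You are also right to single out the excursion dichotomy as the real gap: the optional-stopping inequality controls the events $\{\tau_R<\tau_{\partial\mathcal{D}}\wedge t\}$ and $\{\tau_{\partial\mathcal{D}}<\tau_R\wedge t\}$ but by itself says nothing about paths that stay in $\mathcal{D}^c$, remain bounded, and never explode; in Chow--Khasminskii this is closed by imposing the recurrence hypothesis $\prob_x(\tau_\Gamma<\infty)=1$ for all relevant starting points rather than only $x\in\partial\mathcal{D}$, which is the form of the assumption you should actually use rather than deriving the dichotomy from $LV\geq CV$ alone.
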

\begin{proof}
    The proof can be found in \cite{chow2014almost}.
\end{proof}
\begin{remark}
The main additional constraint is having $\prob(\tau_\Gamma<\infty)=1$. Note, because we are assuming the conditions for theorem 2.2, we know that when our process reaches $\Gamma$, there is a positive probability of reaching the outer disk $R$ in finite time, or $\prob(\tau_\infty<\infty)>0$. Thus, our process starts from $\Gamma$ and either returns to $\Gamma$ in a finite amount of time or hits $\partial\mathcal{D}$ and returns to $\Gamma$ in a finite amount of time with probability 1. Thus, every time our process returns to $\Gamma$ we have a positive probability of explosion, and so eventually our process must explode as you cannot have an infinite coin toss without observing a heads.  
\end{remark} 

A sufficient criterion for $\prob(\tau_\Gamma<\infty)=1$ is ellipticity\cite{koralov2007theory}, which for one dimensional SDE of the form $dX_t=b(x,t)dt+\sigma(x,t)dW_t$, has $\sigma(x,t)>c_0>0$. Or in short, the noise cannot be killed. It is worth mentioning that ellipticity in multiple dimension is defined by the following condition 
$$v^T\sigma(x)\sigma(x)^Tv\geq c_0\lVert v\rVert^2$$
where $\sigma$ is a matrix and $v$ is a vector. In short, as long as the noise stays bounded above zero, our process cannot get stuck inside $D$ and must reach $\Gamma$ in a finite amount of time as desired. 

\begin{example}Leon and Villa has already shown that for $b(X_t)>0$ and increasing with $\sigma(t,X_t)=\sigma$, where $\sigma$ is a constant, we have almost sure explosion when $\int_1^\infty\frac{1}{b(y)}dy<\infty$ and almost sure no-explosion when $\int_1^\infty\frac{1}{b(y)}dy=\infty$\cite{leon2011osgood}. Let us further assume that $\frac{b'(x)}{b(x)^2}\rightarrow0$ as $x\rightarrow\infty$. Note that for the almost sure explosion case, we can choose $V(x)=\int_1^x \frac{1}{b(y)}dy$. Then $\lim_{x\rightarrow\infty}V(x)=K_1<\infty$ and thus our supremum and infimum conditions are satisfied. Furthermore, since $b(x)$ is positive monotonic increasing, we know $\sup_{x\in\partial D}V(x)<\inf_{x\in\Gamma}V(x)$. Now, we need to check that there exists a positive $C$ such that $LV\geq CV$. Note
$$LV=\frac{b(x)}{b(x)}-\frac{\sigma}{2}\frac{b'(x)}{b(x)^2}$$
Thus, from our assumption, we know for a sufficiently large $x$, we know our $LV>0$. Furthermore, since $V(x)=\int_1^x\frac{1}{b(y)}dy<K_1$, we can choose always choose $C$ to be sufficiently smaller than our $LV$, so our condition $LV\geq CK_1\geq CV$ also holds. 
\end{example}

\begin{remark}
For the cases when $b(x)$ is a simple polynomial like $b(x)=x^p$, we know $\int^\infty_1\frac{1}{x^p}dx<\infty$ when $p>1$. Then, we know $\frac{b'(x)}{b(x)^2}=\frac{(p-1)x^{p-1}}{x^{2p}}\rightarrow0$ as $x\rightarrow\infty$. Thus, it is worth noticing that the added assumption $\frac{b'(x)}{b(x)^2}$ does not provide much hindrance for many of the cases one may find in nature. 
\end{remark}

\section{Conclusion}
Thus, in this paper, we started our expedition of finite time explosion for SDEs through ODE comparisons as presented by Leon and Villa, which stated that explosion for SDEs with constant intensity noise follows analogously with the explosion of ODEs through the Osgood criterion. However, from the prominent Feller's test for explosion, we were able to observe that for different types of noise functions, we could have almost sure non-explosion and positive probability of explosion regardless of the Osgood criterion. Then, we explored Khasminskii's Lyapunov function methods to classify explosion for SDE in multiple dimensions and provide an extension to jump processes. Nevertheless, because of the one-dimensional aspect of Feller's test for explosion and the lengthy integration tests, we adopted Khasminskii's method of Lyapunov functions to characterize explosions for SDEs. 

We first establish Khasminskii's conditions for almost sure non-explosion using Khasminskii's Lyapunov method. We then show the almost sure non-explosion for constant intensity is consistent with Lyon Villa's finding with the Osgood criterion as shown in Example 2.4. We show further examples on a 2-dimensional Ito integral that will be compared with the 3-dimensional Ito integral to illuminate the dependence on dimension with explosions. 

We then use Khasminskii's theorem on almost sure explosion and show that a sufficient condition for explosion is ellipticity, which in one dimension is consistent with the findings of Leon and Villa with the Osgood criterion. Afterwards, we show through Khasminskii's criterion for positive probability of explosion which is derived through Khasminskii's Lyapunov method for positive probability of explosion. We then show that the 3-dimensional Ito integral, which has zero mean, can explode with positive probability.

\section*{Acknowledgement}
I am grateful to Professor M. Salins for introducing me to the problem of finite time explosion in SDEs and the Osgood criterion. His super power of glancing at a paper once and explaining it in simple terms helped me choose the most illuminating examples to include. 

\section{Appendix}

We show that an Ito integral is a martingale with zero mean. Note from definition of Ito integral $I_n$
\begin{align*}
    \E(I_{n+1}|\mathcal{F}_n)&=\E\left(\sum_{k=0}^{n}\Delta_k(W_{n+1}-W_n)\bigg{|}\mathcal{F}_n\right)\\
    &=\E\left(\sum_{k=0}^{n-1}\Delta_k(W_{n}-W_{n-1})+\Delta_n(W_{n+1}-W_{n})\bigg{|}\mathcal{F}_n\right)\\
    &=I_n+\E(\Delta_n(W_{n+1}-W_{n})|\mathcal{F}_n)\\
    &=I_n \:\:\:\:\text{as $\E(W_{n+1}-W_{n}|\mathcal{F}_n)=0$}
\end{align*}
Thus, we find the Ito integral is a martingale as desired.

\bibliographystyle{plain}
\bibliography{bib}

\begin{thebibliography}{1}

\bibitem{chow2014almost}
Pao-Liu Chow and Rafail Khasminskii.
\newblock Almost sure explosion of solutions to stochastic differential equations.
\newblock {\em Stochastic Processes and their Applications}, 124(1):639--645, 2014.

\bibitem{karatzas2014brownian}
Ioannis Karatzas and Steven Shreve.
\newblock {\em Brownian motion and stochastic calculus}, volume 113.
\newblock springer, 2014.

\bibitem{khasminskii2012stochastic}
Rafail Khasminskii.
\newblock {\em Stochastic stability of differential equations}.
\newblock Springer, 2012.

\bibitem{koralov2007theory}
Leonid Koralov and Yakov~G Sinai.
\newblock {\em Theory of probability and random processes}.
\newblock Springer Science \& Business Media, 2007.

\bibitem{leon2011osgood}
Jorge~A Le{\'o}n and Jos{\'e} Villa.
\newblock An osgood criterion for integral equations with applications to stochastic differential equations with an additive noise.
\newblock {\em Statistics \& probability letters}, 81(4):470--477, 2011.

\bibitem{oksendal2013stochastic}
Bernt Oksendal.
\newblock {\em Stochastic differential equations: an introduction with applications}.
\newblock Springer Science \& Business Media, 2013.

\bibitem{oksendal2019stochastic}
Bernt {\O}ksendal and Agnes Sulem.
\newblock Stochastic control of jump diffusions.
\newblock In {\em Applied Stochastic Control of Jump Diffusions}, pages 93--155. Springer, 2019.

\bibitem{osgood1898beweis}
William~Fogg Osgood.
\newblock Beweis der existenz einer l{\"o}sung der differentialgleichung \(\frac{dy}{dx} = f\left( x, y \right)\) ohne hinzunahme der cauchy-lipschitz'schen bedingung.
\newblock {\em Monatshefte f{\"u}r Mathematik und Physik}, 9:331--345, 1898.

\end{thebibliography}

\end{document}